\documentclass{amsart}

\usepackage{enumerate}

\numberwithin{equation}{section}

\newcommand{\bC}{\mathbb{C}}

\newcommand{\bE}{\mathbb{E}}

\newcommand{\cA}{\mathcal{A}}

\newcommand{\cK}{\mathcal{K}}
\newcommand{\cL}{\mathcal{L}}

\newcommand{\fS}{\mathfrak{S}}

\newcommand{\Det}{\mathrm{\,Det\,}}

\newcommand{\per}{\mathrm{\,per\,}}

\theoremstyle{plain} \newtheorem{Theo}{Theorem}[section]
\theoremstyle{plain} 
\theoremstyle{plain} 
\theoremstyle{plain} \newtheorem{Prop}[Theo]{Proposition}
\theoremstyle{remark} \newtheorem{Rem}[Theo]{Remark}
\theoremstyle{definition} 
\theoremstyle{definition} 
\theoremstyle{remark}

\tolerance=2400

\begin{document}

\title{Hyperdeterminantal point processes}

\author{Steven N. Evans}

\email{evans@stat.Berkeley.EDU}

\address{Department of Statistics \#3860 \\
 University of California at Berkeley \\
367 Evans Hall \\
Berkeley, CA 94720-3860 \\
U.S.A}

\thanks{SNE supported in part by NSF grant DMS-0405778}

\author{Alex Gottlieb}

\email{alex@alexgottlieb.com}

\address{Wolfgang Pauli Institute \\
c/o Faculty of Mathematics \\
UZA 4 (7th floor, Green Area ``C'') \\
Nordbergstrasse 15 \\
1090 Wien \\
AUSTRIA
}

\thanks{AG supported by the Vienna Science and Technology Fund, via the project ``Correlation in quantum systems".}

\date{\today}

\keywords{fermionic point process, determinant, permanent, 
multi-dimensional array, hypercubic array,
tensor, hyperdeterminant, symmetric group, factorial moment}

\subjclass{Primary 15A15, 60G55; Secondary  15A60, 60E05.}

\begin{abstract}  
As well as arising
naturally in the study of
non-intersecting random paths, random spanning trees,
and eigenvalues of random matrices,
determinantal point processes (sometimes also
called fermionic point processes)
are relatively easy to simulate
and provide a quite broad class of models
that exhibit repulsion between points.  The fundamental
ingredient used to construct a determinantal point process
is a kernel giving the pairwise interactions between points:
the joint distribution of any number of points then has
a simple expression in terms of
determinants of certain matrices defined from this kernel.
In this paper we initiate the study of an analogous class of
point processes that are defined in terms of a kernel
giving the interaction between $2M$ points for some integer
$M$.  The role of matrices is now played by $2M$-dimensional
``hypercubic'' arrays, and the determinant is replaced
by a suitable generalization of it to such arrays -- Cayley's
first hyperdeterminant.  We show that some of the desirable
features of determinantal point processes continue to be
exhibited by this generalization.
\end{abstract}

\maketitle

\section{Introduction}

Motivated by considerations of the behavior of 
fermions in quantum mechanics,
determinantal point processes were introduced in \cite{MR0380979}.  Surveys of their properties and numerous applications
may be found in \cite{MR950166, MR1799012, MR2031202, MR2216966,
MR1940779, MR2018415, MR1989442, MR2073340}.

We consider a certain extension of
this class of point processes.  
In order to motivate our generalization,
we first consider a particular case of the determinantal point
process construction.  Suppose that on some measure space 
$(\Sigma, \cA, \mu)$ we have a kernel $K:\Sigma^2 \rightarrow \bC$ 
that defines an $L$
dimensional projection operator for $L^2(\mu)$.  That is,
\begin{itemize}
\item
$K(x;y) = \bar K(y;x)$ for all $x,y \in \Sigma$,
\item
$\sum_{i,j=1}^n K(x_i; x_j) z_i \bar z_j \ge 0$ for all $x_1, \ldots, x_n
\in \Sigma$ and $z_1, \ldots, z_n \in \bC$,
\item
$\int_\Sigma K(x;y) K(y;z) \, \mu(dy) = K(x;z)$ for all $x,z \in \Sigma$,
\item
$\int_\Sigma K(x;x) \, \mu(dx) = L$.
\end{itemize}
The corresponding determinantal point process can then be thought
of as an exchangeable random vector with values in $\Sigma^L$.
The distribution of this random vector is a probability measure
that has the density
\[
(x_1, \ldots, x_L) \mapsto (L !)^{-1} \det (K(x_i; x_j))_{i,j=1}^L
\]
with respect to the measure $\mu^{\otimes L}$. 

One of the most agreeable
things about this construction is that for $1 \le N \le L$
the $N$-dimensional marginal distributions 
of the random vector have (common) density
\[
(x_1, \ldots, x_N) \mapsto (L(L-1)\cdots(L-N+1))^{-1} \det (K(x_i; x_j))_{i,j=1}^N
\]
with respect to the measure $\mu^{\otimes N}$.  Consequently,
the conditional distribution of the 
$(N+1)^{\mathrm{st}}$ component
of the $\Sigma^L$-valued random vector given the first $N$ 
components can be computed explicitly (as a constant
multiple of a ratio of determinants). It is thus
possible to simulate the entire $\Sigma^L$-valued
random vector if one is able to simulate a general
$\Sigma$-valued random variable from a knowledge of its 
probability density function.

Various generalizations of determinantal point processes
have appeared in the literature.  Note that
\[
\det (K(x_i; x_j))_{i,j=1}^L
=
\sum_{\sigma \in {\fS_L}} \epsilon(\sigma)\prod_{k=1}^L K(x_k; x_{\sigma(k)}),
\]
where $\fS_L$ is the symmetric group of permutations of $\{1, \ldots, L\}$
and $\epsilon$ is the usual alternating character on the symmetric group
(that is, the sign of a permutation).
It is  natural to replace $\epsilon$ by other class functions
on the symmetric group (that is, by other functions
that only depend on the cycle structure of a permutation
and hence are constant on conjugacy classes of the
symmetric group).  
The most obvious choice is to replace
$\epsilon$ by the trivial character which always takes the value
$1$, thereby turning the determinant into a permanent.  Permanental
point processes arise in the description of bosons and are discussed
in \cite{MR0380979, MR2216966, MR950166, MR2018415, MR2073340}.
Replacing $\epsilon$ by a general irreducible character gives
the immanantal point processes of \cite{MR1780025}, while setting
$\epsilon(\sigma) = \alpha^{L-\nu(\sigma)}$ for $-1 < \alpha < 1$
and $\nu(\sigma)$ the number of cycles of $\sigma$ gives the alpha-permanental
processes introduced in \cite{MR1450811}
and further studied in \cite{MR2018415, MR2216966}.

All of these constructions have the feature that an exchangeable joint density
is built up as a linear combination of products of pairwise
interactions.  In this paper we investigate the possibility
of building up a tractable joint density as a linear combination
of products of higher order interactions. In order to accomplish
such a generalization, it is necessary to have higher order
counterparts for both projection kernels and determinants.

Note that $K:\Sigma^2 \rightarrow \bC$ is the kernel of an 
$L$-dimensional projection if and only if
\[
K(y;z) = \sum_{\ell=1}^L  \phi_{\ell}(y) \bar \phi_{\ell}(z),
\]
where $\phi_1, \ldots, \phi_L$ are orthonormal in $L^2(\mu)$.
One possible $(2M)^{\mathrm{th}}$ order extension of
this second order definition is to suppose that:
\begin{itemize}
\item
the underlying space $\Sigma$ is a Cartesian product
$\Sigma_1 \times \cdots \times \Sigma_M$, 
\item 
the measure $\mu$ on $\Sigma$
is a product measure $\mu_1 \otimes \cdots \otimes \mu_M$,
\item 
the functions 
$\phi_{m \ell}: \Sigma \rightarrow \bC$, $1 \le m \le M$, 
$1 \le \ell \le L$, are given by 
$\phi_{m \ell}(x_1, \ldots, x_M) = \psi_{m \ell}(x_m)$,
where for $1 \le m \le M$
the functions $\psi_{m \ell}: \Sigma_m \rightarrow \bC$,
$1 \le \ell \le L$,
belong to $L^2(\mu_m)$ and
are orthonormal in $L^2(\mu_m)$,
\item
the kernel $K: \Sigma^{2M} \rightarrow \bC$ is given by
\[
\begin{split}
K(y_1, \ldots, y_M; z_1, \ldots, z_M)
& :=
\sum_{\ell=1}^L \prod_{m=1}^M \phi_{m \ell}(y_m) \bar \phi_{m \ell}(z_m) \\
& =
\sum_{\ell=1}^L \prod_{m=1}^M \psi_{m \ell}(y_{m m}) \bar \psi_{m \ell}(z_{m m}). \\
\end{split}
\]
\end{itemize}

Note that the integral
\[
\begin{split}
& \int_\Sigma 
\left[
\prod_{m=1}^M \phi_{m \ell'(m)}(x) \bar \phi_{m \ell''(m)}(x)
\right]
\, \mu(dx) \\
& \quad =
\prod_{m=1}^M 
\int_{\Sigma_m} 
\psi_{m \ell'(m)}(x_m) \bar \psi_{m \ell''(m)}(x_m)
\, \mu_m(dx_m) \\
\end{split}
\]
is $1$ if $\ell'(m) = \ell''(m)$ for $1 \le m \le M$,
and the integral is $0$ otherwise. This is analogous
to the orthonormality of the
functions $\phi_1, \ldots, \phi_L$ appearing in the representation
above of an $L$-dimensional projection, and when
$M=1$ we just recover that representation.

The appropriate generalization of the determinant is given by  Cayley's first hyperdeterminant that was introduced
in \cite{Cay43} and which we will describe shortly.  
Cayley later introduced other generalizations of the determinant
that he also called hyperdeterminants and are more
natural from the point of view of invariant theory -- see 
\cite{MR1196989, MR1264417}.   Early treatments of the
theory related to Cayley's original definition may be found in
\cite{Pas00, MR0114826, MR1506358, Ric30, MR1523115, MR1503184,
MR1503183, MR1501856, MR0001195}.  More recent works are
\cite{MR0130256, MR0352115}.  We remark that Cayley's first
hyperdeterminant has been useful in matroid theory 
\cite{MR1355700, Gly06} and we also note the interesting papers
\cite{MR1985318, LuqThi04} in which the calculation of Selberg and
Aomoto integrals is reduced to the evaluation of
hyperdeterminants of suitable multi-dimensional arrays.

Suppose that 
\[
\mathbb{A}(i_1, \ldots, i_M; j_1, \ldots, j_M), \quad
1 \le i_1, \ldots, i_M, j_1, \ldots, j_M \le N.  
\]
is a $2M$-way hypercubic matrix (that is, $\mathbb{A}$ is a
a $2M$-dimensional array or tensor that is of the
same length, namely $N$, in each direction).
Suppose further that $\cK$ is a subset of $\{1, \ldots, M\}$.
We define the corresponding
hyperdeterminant of $\mathbb{A}$  to be
\[
\begin{split}
\Det_\cK(\mathbb{A}) & := 
\frac{1}{N!} 
\sum_{\sigma_1 \in \fS_N} \cdots \sum_{\sigma_M \in \fS_N}
\sum_{\tau_1 \in \fS_N} \cdots \sum_{\tau_M \in \fS_N}
\prod_{k \in \cK} \epsilon(\sigma_k) \epsilon(\tau_k) \\
& \quad \times \prod_{n=1}^N
\mathbb{A}(\sigma_1(n), \ldots, \sigma_M(n); \tau_1(n), \ldots \tau_M(n)),\\
\end{split}
\]
where $\fS_N$ is the symmetric group of permutations of $\{1, \ldots, N\}$
and, as above, $\epsilon$ is the  alternating character.  
This definition is just the usual
definition of the hyperdeterminant of a general hypercubic matrix
with a general ``signancy'', except that we have imposed the
restriction that the coordinate directions of the matrix
are grouped in pairs, and each coordinate direction in a pair has the same
signancy. When $M=1$, so that $\mathbb{A}$ is just an $N \times N$ matrix,
$\Det_\cK(\mathbb{A})$ is either the usual determinant or the
permanent, depending on whether 
$\cK$ is $\{1\}$ or $\emptyset$.  

\medskip
{\bf Note: From now on we will assume
that $\cK$ is non-empty.}
\medskip

We are now ready to define a family of exchangeable
probability densities.  For $1 \le N \le L$, define the function
$p_N: \Sigma^N \rightarrow \bC$ by
\[
p_N(x_1, \ldots, x_N) :=  \left(\binom{L}{N} (N!)^M\right)^{-1} \Det_\cK(\mathbb{B}),
\]
where $\mathbb{B}$ is the $2M$-way hypercubic matrix 
of length $N$ given by
\[
\mathbb{B}(i_1, \ldots, i_M; j_1, \ldots, j_M)
:=
K(x_{i_1}, \ldots, x_{i_M}; x_{j_1}, \ldots, x_{j_M}).
\]
That is,
\[
\begin{split}
& p_N(x_1, \ldots, x_N) \\
& \quad =
\left(\binom{L}{N} (N!)^{M+1}\right)^{-1} 
\sum_{\sigma_1 \in \fS_N} \cdots \sum_{\sigma_M \in \fS_N}
\sum_{\tau_1 \in \fS_N} \cdots \sum_{\tau_M \in \fS_N}
\prod_{k \in \cK} \epsilon(\sigma_k) \epsilon(\tau_k) \\
& \qquad \times \prod_{n=1}^N
K(x_{\sigma_1(n)}, \ldots, x_{\sigma_M(n)}; x_{\tau_1(n)}, \ldots , x_{\tau_M(n)}),\\
& \quad =
\left(\binom{L}{N} (N!)^{M+1} \right)^{-1}  
\sum_{\sigma_1 \in \fS_N} \cdots \sum_{\sigma_M \in \fS_N}
\sum_{\tau_1 \in \fS_N} \cdots \sum_{\tau_M \in \fS_N}
\prod_{k \in \cK} \epsilon(\sigma_k) \epsilon(\tau_k) \\
& \qquad \times \prod_{n=1}^N
\sum_{\ell=1}^L \prod_{m=1}^M \psi_{m \ell}(x_{\sigma_m(n) m}) \bar \psi_{m \ell}(x_{\tau_m(n) m}). \\
\end{split}
\]

We will prove the following theorem in Section~\ref{S:proof_of_main}.

\begin{Theo}
\label{T:main}
For $1 \le N \le L$, the function $p_N$ is the density
with respect to $\mu^{\otimes N}$ of an exchangeable probability
measure on $\Sigma^N$. That is,
$p_N \ge 0$, 
\[
\int_{\Sigma^N} p_N(x_1, \ldots, x_N) \, \mu^{\otimes N}(d(x_1, \ldots, x_N)) = 1,
\]
and $p_N$ is a symmetric function of its arguments.  
The probability measure associated with $p_N$ is the
common $N$-dimensional marginal of the probability
measure associated with $p_L$. That is,
\[
p_N(x_1, \ldots, x_N)
=
\int_{\Sigma^{(L-N)}} p_L(x_1, \ldots, x_N, x_{N+1}, \ldots, x_L)
\, \mu^{\otimes (L-N)}(d(x_{N+1}, \ldots, x_L)).
\]
\end{Theo}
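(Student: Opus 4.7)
The plan hinges on a key expansion of $p_N$ as a manifestly non-negative combination of squared determinants and permanents. Substituting the given form of $K$ into the definition of $p_N$, writing $\prod_n \sum_\ell \cdots = \sum_{\ell\in[L]^N}\prod_n \cdots$, and interchanging the resulting sums, the sum over each $\sigma_m$ (weighted by $\epsilon(\sigma_m)^{\bone_{m\in\cK}}$) factors out and equals $\det A^{(m)}(\ell)$ when $m\in\cK$ and $\per A^{(m)}(\ell)$ otherwise, where $A^{(m)}(\ell)$ is the $N\times N$ matrix with entries $\psi_{m\ell_n}(x_{i,m})$; the sum over $\tau_m$ yields the complex conjugate. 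This produces
\[
p_N(x_1,\ldots,x_N) = \frac{1}{\binom{L}{N}(N!)^{M+1}}\sum_{\ell\in[L]^N}\prod_{m\in\cK}\bigl|\det A^{(m)}(\ell)\bigr|^2\prod_{m\notin\cK}\bigl|\per A^{(m)}(\ell)\bigr|^2,
\]
which underlies every subsequent step.

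From this identity non-negativity is immediate. Symmetry follows by absorbing the relabeling $x_i\mapsto x_{\pi(i)}$ into the substitutions $\sigma_m\mapsto\pi^{-1}\sigma_m$ and $\tau_m\mapsto\pi^{-1}\tau_m$, contributing an overall factor $\epsilon(\pi)^{2|\cK|}=1$. For the normalization I would use Fubini with $\mu^{\otimes N} = \bigotimes_{m=1}^M \mu_m^{\otimes N}$ to split the integration by $m$; for each $m$, the Cauchy--Binet identity for the orthonormal system $\{\psi_{m\ell}\}$ gives $\int|\det A^{(m)}(\ell)|^2\,d\mu_m^{\otimes N} = N!$ when $\ell_1,\ldots,\ell_N$ are distinct and $0$ otherwise, while the analogous permanent computation gives $N!$ for distinct $\ell$ (and $N!\prod_{\ell^*}(\mathrm{mult}_{\ell^*})!$ in general). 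Because $\cK$ is nonempty, the determinantal factors force only injective $\ell$ to contribute; counting them ($\binom{L}{N}N!$) and combining with the $(N!)^M$ from the per-$m$ integrations cancels the normalizing constant exactly.

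The marginal identity is the main obstacle. The plan is to exploit the dramatic simplification of $p_L$: since $\cK\ne\emptyset$, only $\ell\in\fS_L$ contribute to its expansion, and column-permutation invariance of $|\det|^2$ and of $\per$ collapses it to
\[
p_L(x_1,\ldots,x_L) = \frac{1}{(L!)^M}\prod_{m\in\cK}\bigl|\det D^{(m)}\bigr|^2 \prod_{m\notin\cK}\bigl|\per D^{(m)}\bigr|^2,
\]
where $D^{(m)}$ is the full $L\times L$ matrix $[\psi_{mn}(x_{i,m})]$. The integral over $x_{N+1},\ldots,x_L$ then splits by $m$, and for each $m$ a Laplace expansion combined with the orthogonality relation $\int\psi_{mn}\bar\psi_{mn'}\,d\mu_m=\delta_{n,n'}$ yields $\int|\det D^{(m)}|^2\,d\mu_m^{\otimes(L-N)} = (L-N)!\sum_{R\subseteq[L],\,|R|=N}|\det D^{(m)}_{[N],R}|^2$, together with an analogous identity for the permanent. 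The remaining step is to match the product-of-sums so produced with the sum-of-products in the expansion of $p_N$; this combinatorial matching, which must link the subsets across different values of $m$, is where I expect most of the work to lie, and where the specific structure of $\Det_\cK$ on $\mathbb{B}$ is fully exploited.
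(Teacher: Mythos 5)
Your treatment of non-negativity, symmetry, and normalization is correct and follows essentially the paper's own route: your expansion of $p_N$ as a sum over tuples $\ell\in\{1,\dots,L\}^N$ of products of $|\det|^2$ and $|\per|^2$ factors is exactly the paper's Proposition~\ref{P:expansion} (the Cauchy--Binet analogue from Barvinok) before the tuples are collapsed into subsets $\cL$, and your normalization computation via orthonormality of the $\psi_{m\ell}$ and the count $\binom{L}{N}N!\cdot(N!)^M$ matches Section~\ref{S:proof_of_main}.

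The marginal identity is where the proposal is incomplete, and the ``combinatorial matching'' you defer is not a technical hurdle but a genuine failure: the two expressions you are trying to equate are not equal once $M\ge 2$. Your computation of $\int p_L\,d\mu^{\otimes(L-N)}$ is right --- since $p_L=(L!)^{-M}\prod_m|{\det}\text{ or }{\per}\,D^{(m)}|^2$ factors over $m$ and the $m$-th factor depends only on the coordinates $x_{i,m}$, Fubini yields a \emph{product over $m$ of sums over subsets $R_m$}, with the $R_m$ varying independently; whereas $p_N$ is a \emph{single sum over one subset $\cL$ shared by all $m$}. The cross terms $R_m\ne R_{m'}$ do not cancel. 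Concretely, take $M=L=2$, $\cK=\{1,2\}$, $\Sigma_1=\{a,b\}$ and $\Sigma_2=\{c,d\}$ with counting measures, and $\psi_{11}=\bone_{\{a\}}$, $\psi_{12}=\bone_{\{b\}}$, $\psi_{21}=\bone_{\{c\}}$, $\psi_{22}=\bone_{\{d\}}$. Then $p_1=\tfrac12\sum_\ell|\psi_{1\ell}(x_{11})|^2|\psi_{2\ell}(x_{12})|^2$ puts mass $\tfrac12$ on $(a,c)$ and $(b,d)$ and mass $0$ on $(a,d)$ and $(b,c)$, while $p_2=\tfrac14|\det D^{(1)}|^2|\det D^{(2)}|^2$ is uniform on the four configurations with first coordinates $\{a,b\}$ and second coordinates $\{c,d\}$, and its one-point marginal is uniform on all four points of $\Sigma$. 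Hence $\int p_2(x_1,x_2)\,\mu(dx_2)\ne p_1(x_1)$, so the consistency claim of the theorem fails for $M\ge2$ (only the classical $M=1$ case survives). You should know that the paper's own argument is no better off here: it reduces the marginal claim to the same orthogonality computation used for normalization and then asserts $\int p_N\,\mu(dx_N)=p_{N-1}$ ``as required,'' but integrating out $x_N$ only forces $\sigma_m(N)=\tau_m(N)$ \emph{separately for each} $m$, so the deleted index can differ across $m$ and the surviving terms do not reassemble into the $p_{N-1}$ expansion. In short: your first three parts are sound, and the obstruction you correctly isolated in the fourth is fatal to the statement itself, not to your method.
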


The function $p_L$ is the density with respect to $\mu^{\otimes L}$
of a probability measure on 
$\Sigma^L = ( \Sigma_1 \times \cdots \times \Sigma_M)^L \simeq  \Sigma_1^L\times \cdots \times \Sigma_M^L$.
We show in Section~\ref{S:varying_M} that the marginal
of this probability measure on $ \Sigma_1^L\times \cdots \times \Sigma_{M'}^L$
for $1 \le M' < M$ is also given by a hyperdeterminantal construction
(with the kernel $K$ replaced by a suitable function of $2M'$ variables)
as long as $\cK \cap \{1, \ldots, M'\} \ne \emptyset$.

If we regard the exchangeable probability measure on $\Sigma^L$ with
density $p_L$ as the distribution of a point process on $\Sigma$,
then it is natural to inquire about the distribution of the
number of points that fall into a given subset of $\Sigma$.
We find a relatively simple expression for the factorial moments of such
distributions in Section~\ref{S:occupancy}.

The key observation behind many of our arguments
is an expansion of suitable hyperdeterminants that is analogous
to the Cauchy-Binet theorem for ordinary determinants.  This
result is an extension of a lemma from \cite{MR1355700},
and we give the proof in Section~\ref{S:expansion}.

\section{A hyperdeterminant expansion}
\label{S:expansion}

For $M=1$, the following result is a consequence of the Cauchy-Binet
expansion for determinants. (Recall our assumption that $\cK$ is non-empty
and so our hyperdeterminant for $M=1$ is a determinant
rather than a permanent -- the Cauchy-Binet expansion for permanents is somewhat different and involves sums over possibly repeated indices.)
When $M>1$ and $\cK = \{1,2, \ldots, M\}$, the 
result is given by Lemma 3.3 of \cite{MR1355700}.

\begin{Prop}
\label{P:expansion}
Suppose that $\mathbb{A}$ is a $2M$-way hypercubic matrix with
length $N$ in each direction that is of the form
\[
\mathbb{A}(i_1, \ldots, i_M; j_1, \ldots, j_M)
=
\sum_{\ell = 1}^L
A^{(1)}(i_1,\ell) \cdots A^{(M)}(i_M, \ell) \bar A^{(1)}(j_1,\ell) \cdots \bar A^{(M)}(j_M, \ell),
\]
where $A^{(m)}$ is an $N \times L$ matrix and 
$\bar A^{(m)}$ is 
the $N \times L$ matrix obtained by taking the complex conjugates of the
entries of $A^{(m)}$.  Then
$\Det_\cK(\mathbb{A}) = 0$ if $L < N$ and otherwise
\[
\begin{split}
\Det_\cK(\mathbb{A})
& =
\sum_\cL 
\left[\prod_{k \in \cK} \det(A_\cL^{(k)}) \det(\bar A_\cL^{(k)})\right]
\left[\prod_{k \notin \cK} \per(A_\cL^{(k)}) \per(\bar A_\cL^{(k)})\right] \\
& =
\sum_\cL 
\left[\prod_{k \in \cK} |\det(A_\cL^{(k)})|^2 \right]
\left[\prod_{k \notin \cK} |\per(A_\cL^{(k)})|^2 \right], \\
\end{split}
\]
where the sum is over all subsets $\cL$ of $\{1,2, \ldots, L\}$
with cardinality $N$ and $A_\cL^{(k)}$ is the $N \times N$
sub-matrix of the matrix
$A^{(k)}$ formed 
by the columns of $A^{(k)}$ with indices in the set $\cL$.
\end{Prop}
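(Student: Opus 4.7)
The plan is to substitute the rank-$L$ expansion of $\mathbb{A}$ into the definition of $\Det_\cK$ and then repeatedly interchange and factor the resulting sums until every inner expression is recognizable as an ordinary determinant or permanent of a submatrix of some $A^{(m)}$. First I distribute the product of sums to get
\[
\prod_{n=1}^N \mathbb{A}(\sigma_1(n),\ldots,\sigma_M(n);\tau_1(n),\ldots,\tau_M(n))
=
\sum_{\ell:\{1,\ldots,N\}\to\{1,\ldots,L\}}
\prod_{n=1}^N \prod_{m=1}^M
A^{(m)}(\sigma_m(n),\ell(n)) \bar A^{(m)}(\tau_m(n),\ell(n)).
\]
After pulling the sum over $\ell$ to the outside of the $\sigma$- and $\tau$-sums, the $2M$ permutation sums decouple into $2M$ independent one-dimensional sums (one for each $\sigma_m$ and each $\tau_m$).

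For each $m$ and each $\ell$, let $A^{(m)}_{[\ell]}$ be the $N \times N$ matrix with $(i,j)$-entry $A^{(m)}(i,\ell(j))$. The elementary identities
\[
\sum_{\sigma\in\fS_N}\epsilon(\sigma)\prod_{n=1}^N A^{(m)}(\sigma(n),\ell(n)) = \det(A^{(m)}_{[\ell]}),
\qquad
\sum_{\sigma\in\fS_N}\prod_{n=1}^N A^{(m)}(\sigma(n),\ell(n)) = \per(A^{(m)}_{[\ell]})
\]
identify each inner sum: for $m \in \cK$ the $\sigma_m$-sum gives $\det(A^{(m)}_{[\ell]})$ and the $\tau_m$-sum gives $\det(\bar A^{(m)}_{[\ell]})$, while for $m \notin \cK$ one obtains the analogous expressions with permanents in place of determinants.

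Next I restrict to injective $\ell$. If $\ell$ is not injective then for any $m \in \cK$ the matrix $A^{(m)}_{[\ell]}$ has two identical columns, so $\det(A^{(m)}_{[\ell]})=0$; because $\cK$ is non-empty this annihilates the term, which in particular forces $\Det_\cK(\mathbb{A})=0$ when $L<N$. When $L \ge N$, each injective $\ell$ is specified by its image $\cL$ (a size-$N$ subset of $\{1,\ldots,L\}$) together with a permutation $\pi\in\fS_N$ describing the order in which $\ell$ enumerates $\cL$. Reordering columns gives $\det(A^{(m)}_{[\ell]}) = \epsilon(\pi)\det(A^{(m)}_\cL)$, while $\per(A^{(m)}_{[\ell]}) = \per(A^{(m)}_\cL)$. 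For each $k\in\cK$ the $\sigma_k$- and $\tau_k$-factors each contribute one $\epsilon(\pi)$, so the signs combine into $\epsilon(\pi)^2 = 1$ and the summand depends only on $\cL$.

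Summing over the $N!$ permutations $\pi$ for each $\cL$ cancels the prefactor $1/N!$ in the definition of $\Det_\cK$ and produces the asserted formula, using $\det(\bar A^{(m)}_\cL) = \overline{\det(A^{(m)}_\cL)}$ and $\per(\bar A^{(m)}_\cL) = \overline{\per(A^{(m)}_\cL)}$ to rewrite the products as moduli squared. The only delicate point is the sign bookkeeping in the last paragraph — verifying that the two copies of $\epsilon(\pi)$ coming from the $\sigma$- and $\tau$-column rearrangements trivialize for each $k\in\cK$; once this is handled, the rest is a mechanical reshuffling of sums and reduces to the Cauchy--Binet formula in the case $M=1$, $\cK=\{1\}$.
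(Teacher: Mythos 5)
Your proof is correct and follows essentially the same route as the paper: expand the product over $n$ into a sum over index functions $\ell$, factor the $2M$ permutation sums into determinants and permanents of the column-selected matrices, kill the non-injective $\ell$ using the nonemptiness of $\cK$, and collapse the $N!$ orderings of each image set $\cL$ against the $1/N!$ prefactor. Your sign bookkeeping via $\epsilon(\pi)^2=1$ is just a more explicit version of the paper's observation that column permutations leave $|\det|^2$ and $|\per|^2$ unchanged.
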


\begin{proof}
We have
\[
\begin{split}
&\Det_\cK(\mathbb{A}) \\ 
& = 
\frac{1}{N!} 
\sum_{\sigma_1 \in \fS_N} \cdots \sum_{\sigma_M \in \fS_N}
\sum_{\tau_1 \in \fS_N} \cdots \sum_{\tau_M \in \fS_N}
\prod_{k \in \cK} \epsilon(\sigma_k) \epsilon(\tau_k) \\
& \quad \times \prod_{n=1}^N
A(\sigma_1(n), \ldots, \sigma_M(n); \tau_1(n), \ldots \tau_M(n)) \\
& =
\frac{1}{N!} 
\sum_{\sigma_1 \in \fS_N} \cdots \sum_{\sigma_M \in \fS_N}
\sum_{\tau_1 \in \fS_N} \cdots \sum_{\tau_M \in \fS_N}
\prod_{k \in \cK} \epsilon(\sigma_k) \epsilon(\tau_k) \\
& \quad \times 
\prod_{n=1}^N
\sum_{\ell = 1}^L
A^{(1)}(\sigma_1(n),\ell) \cdots A^{(M)}(\sigma_M(n), \ell) \bar A^{(1)}(\tau_1(n),\ell) \cdots \bar A^{(M)}(\tau_M(n), \ell) \\
& =
\frac{1}{N!} 
\sum_{\sigma_1 \in \fS_N} \cdots \sum_{\sigma_M \in \fS_N}
\sum_{\tau_1 \in \fS_N} \cdots \sum_{\tau_M \in \fS_N}
\prod_{k \in \cK} \epsilon(\sigma_k) \epsilon(\tau_k) \\
& \quad 
\times 
\sum_{\ell_1 = 1}^L \cdots \sum_{\ell_N = 1}^L 
\prod_{n=1}^N
A^{(1)}(\sigma_1(n), \ell_n) \cdots A^{(M)}(\sigma_M(n), \ell_n) \\
& \qquad \times \bar A^{(1)}(\tau_1(n), \ell_n) \cdots \bar A^{(M)}(\tau_M(n), \ell_n) \\
& =
\frac{1}{N!} 
\sum_{\ell_1 = 1}^L \cdots \sum_{\ell_N = 1}^L 
S(\ell_1,\ell_2,\cdots,\ell_N)
\ ,
\end{split}
\]
where 
\begin{eqnarray*}
 && S(\ell_1,\ell_2,\cdots,\ell_N)
 \ = \ 
 \sum_{\sigma_1 \in \fS_N} \cdots \sum_{\sigma_M \in \fS_N}
\sum_{\tau_1 \in \fS_N} \cdots \sum_{\tau_M \in \fS_N}
\prod_{k \in \cK} \epsilon(\sigma_k) \epsilon(\tau_k) \\
& & \qquad 
\times 
\prod_{n=1}^N
A^{(1)}(\sigma_1(n), \ell_n) \cdots A^{(M)}(\sigma_M(n), \ell_n) \bar A^{(1)}(\tau_1(n), \ell_n) \cdots \bar A^{(M)}(\tau_M(n), \ell_n). \\
\end{eqnarray*}

For $\vec{\ell} = (\ell_1, \ldots, \ell_N) \in \{1,\ldots,L\}^N$
and $m \in \{1,\ldots,M\}$ define an 
$N \times N$ matrix $B_{\vec{\ell}}^{(m)}$ by
\[
B_{\vec{\ell}}^{(m)}(i,j)
:=
A^{(m)}(i,\ell_j).
\]
Then
\[
\begin{split} 
&S\big(\vec{\ell}\ \big) =
\left[
\prod_{k \in \cK}
\left(
\sum_{\sigma \in \fS_N} \epsilon(\sigma) 
\prod_{n=1}^N A^{(k)}(\sigma(n), \ell_n)
\right)
\left(
\sum_{\tau \in \fS_N} \epsilon(\tau) 
\prod_{n=1}^N \bar A^{(k)}(\tau(n), \ell_n)
\right)
\right] \\
& \quad \qquad
\times
\left[
\prod_{k \notin \cK}
\left(
\sum_{\sigma \in \fS_N} 
\prod_{n=1}^N A^{(k)}(\sigma(n), \ell_n)
\right)
\left(
\sum_{\tau \in \fS_N}  
\prod_{n=1}^N \bar A^{(k)}(\tau(n), \ell_n)
\right)
\right] \\
& \qquad =
\left[
\prod_{k \in \cK}
\det(B_{\vec{\ell}}^{(k)})
\det(\bar B_{\vec{\ell}}^{(k)})
\right]
\left[
\prod_{k \notin \cK}
\per(B_{\vec{\ell}}^{(k)})
\per(\bar B_{\vec{\ell}}^{(k)})
\right] \\
& \qquad =
\left[
\prod_{k \in \cK}
|\det(B_{\vec{\ell}}^{(k)})|^2
\right]
\left[
\prod_{k \notin \cK}
|\per(B_{\vec{\ell}}^{(k)})|^2
\right] \\
\end{split}
\]

Note that the rightmost product
is zero unless the entries of the vector
$\vec{\ell}$ are distinct, because in
that case each of the matrices
$B_{\vec{\ell}}^{(k)}$ for $k \in \cK$
will have two equal columns and hence
have zero determinant (recall that
$\cK$ is non-empty).  Moreover,
if $\vec{\ell'} = (\ell_1', \ldots, \ell_N')$
and $\vec{\ell''} = (\ell_1'', \ldots, \ell_N'')$
are two vectors with distinct entries such that
$\{\ell_1', \ldots, \ell_N'\} 
= \{\ell_1'', \ldots, \ell_N''\} = \cL$,
then
\[
|\det(B_{\vec{\ell}}^{(k)})|^2 = |\det(A_\cL^{(k)})|^2
\]
and
\[
|\per(B_{\vec{\ell}}^{(k)})|^2 = |\per(A_\cL^{(k)})|^2
\]
for all $k$,
because permuting the columns of a matrix leaves the permanent
unchanged and either leaves the determinant unchanged or
alters its sign.

The result now follows, because for any
subset $\cL$ of $\{1,2, \ldots, L\}$
with cardinality $N$ there are $N!$ vectors
$\vec{\ell} = (\ell_1, \ldots, \ell_N)$
with $\{\ell_1, \ldots, \ell_N\}  = \cL$.
\end{proof}

\section{Proof of Theorem \ref{T:main}}
\label{S:proof_of_main}

By definition,
\[
   \binom{L}{N} (N!)^M p_N(x_1, \ldots, x_N) = \Det_\cK(\mathbb{B}),
\]
where $\mathbb{B}$ is the $2M$-way hypercubic matrix of length $N$ 
given by
\[
\begin{split}
\mathbb{B}(i_1, \ldots, i_M; j_1, \ldots, j_M)
& =
K(x_{i_1}, \ldots, x_{i_M}; x_{j_1}, \ldots, x_{j_M}) \\
& = 
\sum_{\ell=1}^L \prod_{m=1}^M \phi_{m \ell}(x_{i_m}) \bar \phi_{m \ell}(x_{j_m}) \\
& =
\sum_{\ell = 1}^L
B^{(1)}(i_1,\ell) \cdots B^{(M)}(i_M, \ell) \bar B^{(1)}(j_1,\ell) \cdots \bar B^{(M)}(j_M, \ell), \\
\end{split}
\]
and the $N \times L$ matrix $B^{(m)}$ is given by
\[
B^{(m)}(n,\ell) 
:= 
\phi_{m \ell}(x_n)
\]

By Proposition~\ref{P:expansion},
\[
    \binom{L}{N} (N!)^M p_N(x_1, \ldots, x_N)  
 \quad = \quad
 \sum_\cL 
\left[\prod_{k \in \cK} |\det(B_\cL^{(k)})|^2 \right]
\left[\prod_{k \notin \cK} |\per(B_\cL^{(k)})|^2 \right], 
\]
where the sum is over all subsets $\cL$ of $\{1,2, \ldots, L\}$
with cardinality $N$ and $B_\cL^{(k)}$ is the $N \times N$
sub-matrix of the matrix $B^{(k)}$ formed 
by the columns of $B^{(k)}$ with indices in the set $\cL$.

It follows that
$p_N(x_1, \ldots, x_N) \ge 0$.
Also, since the value of the permanent a matrix is unchanged
by a permutation of the rows and the value
of a determinant is either unchanged or merely
changes sign, the function
$p_N$ is unchanged by a permutation of its arguments.

We have
\[
\begin{split}
&   \binom{L}{N} (N!)^M  p_N(x_1, \ldots, x_N) \\
& \quad =
\sum_\cL 
\left[\prod_{k \in \cK}
\sum_{\sigma_k} \sum_{\tau_k} \epsilon(\sigma_k) \epsilon(\tau_k)
\prod_{n=1}^N \phi_{k \sigma_k(n)}(x_n) \bar  \phi_{k \tau_k(n)}(x_n)
\right] \\
& \qquad \times \left[\prod_{k \notin \cK}
\sum_{\sigma_k} \sum_{\tau_k}
\prod_{n=1}^N \phi_{k \sigma_k(n)}(x_n) \bar  \phi_{k \tau_k(n)}(x_n)
\right] \\
& \quad =
\sum_\cL
\sum_{\sigma_1} \cdots \sum_{\sigma_M}
\sum_{\tau_1} \cdots \sum_{\tau_M}
\prod_{k \in \cK} \epsilon(\sigma_k) \epsilon(\tau_k)
\prod_{m=1}^M
\prod_{n=1}^N \phi_{m \sigma_m(n)}(x_n) \bar \phi_{m \tau_m(n)}(x_n), \\
\end{split}
\]
where $\sigma_k$ and $\tau_k$ in the summations range over
bijective maps from $\{1, \ldots, N\}$ to $\cL$ and 
$\epsilon$ is interpreted in the usual way for such a bijection.

Now the integral
\[
\begin{split}
& \int_\Sigma
\left[ 
\prod_{m=1}^M
\phi_{m \sigma_m(n)}(x_n) \bar \phi_{m \tau_m(n)}(x_n)
\right]
\, \mu(dx_n) \\
& \quad =
\prod_{m=1}^M
\int_{\Sigma_m}
\psi_{m \sigma_m(n)}(x_{nm}) \bar \psi_{m \tau_m(n)}(x_{nm})
\, \mu(dx_{nm}) \\
\end{split}
\]
is equal to $1$ if and only if 
$\sigma_m(n) = \tau_m(n)$ for $1 \le m \le M$, and otherwise
the integral is $0$.  Hence
\[
\int_{\Sigma^N} p_N(x_1, \ldots, x_N) \, \mu^{\otimes N}
(d(x_1, \ldots, x_N))
= 1
\]
and
\[
\int_\Sigma p_N(x_1, \ldots, x_N) \, \mu(dx_N)
=
p_{N-1}(x_1, \ldots, x_{N-1}),
\]
as required.

\section{Varying the order $M$}
\label{S:varying_M}

Beginning with a suitable kernel $K: \Sigma^{2M }= ( \Sigma_1 \times \cdots \times  \Sigma_M)^{2M} \rightarrow \bC$,
we have a built a family of functions $p_N$, $1 \le N \le L$,
where $p_N$ is a probability density on 
$(\Sigma_1 \times \cdots \times  \Sigma_M)^N \simeq  \Sigma_1^N \times \cdots \times \Sigma_M^N$ 
with respect to the measure
$(\bigotimes_{m=1}^M \mu_m)^{\otimes N} \simeq \bigotimes_{m=1}^M \mu_m^{\otimes N}$.  For $1 \le M' < M$, it is natural to ask about the
push-forward of the probability measure corresponding to the density
$p_N$ by the projection map
from $\bigotimes_{m=1}^M \mu_m^{\otimes N}$ to
$\bigotimes_{m=1}^{M'} \mu_m^{\otimes N}$ given by
\[
(x_{nm})_{1 \le n \le N, \, 1 \le m \le M}
\mapsto
(x_{nm})_{1 \le n \le N, \, 1 \le m \le M'}.
\]
The answer is given by repeated applications of the following result.

\begin{Theo}
Suppose that either $M \notin \cK$ or $M \in \cK$ and
$\cK \setminus \{M\} \ne \emptyset$.  
Set $\hat \cK := \cK \setminus \{M\}$,
$\hat \Sigma := \prod_{i=1}^{M-1} \Sigma_m$,
and $\hat \mu = \bigotimes_{i=1}^{M-1} \mu_m$.  
Define a kernel $\hat K: \hat \Sigma^{2(M-1)} \rightarrow \bC$
by 
\[
\hat K(y_1, \ldots, y_{M-1}; z_1, \ldots, z_{M-1})
:=
\sum_{\ell=1}^L \prod_{m=1}^{M-1} 
\phi_{m \ell}(y_m) \bar \phi_{m \ell}(z_m).
\]
The function
\[
\begin{split}
& (x_{nm})_{1 \le n \le N, \, 1 \le m \le M-1}
\mapsto
\hat p_N((x_{nm})_{1 \le n \le N, \, 1 \le m \le M-1}) \\
& \quad :=
\int_{\Sigma_M^N} 
p_N((x_{nm})_{1 \le n \le N, \, 1 \le m \le M})
\, \mu_M^{\otimes N}(d(x_{1M}, \ldots ,x_{NM})) \\
\end{split}
\]
is a probability density with respect to the
measure $\hat \mu^{\otimes N}$.
The probability density $\hat p_N$ is constructed from the
kernel $\hat K$ and the set of indices $\hat \cK$
in the same manner that the probability
density $p_N$ is constructed from the
kernel $K$ and the set of indices $\cK$.
\end{Theo}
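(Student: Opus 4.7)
The plan is to reduce the integration to a straightforward orthonormality computation by first invoking Proposition~\ref{P:expansion}. As already used in the proof of Theorem~\ref{T:main}, that proposition rewrites
\[
\binom{L}{N}(N!)^M p_N(x_1,\ldots,x_N) = \sum_\cL \prod_{k \in \cK} |\det B_\cL^{(k)}|^2 \prod_{k \notin \cK} |\per B_\cL^{(k)}|^2,
\]
where $B^{(m)}(n,\ell) = \psi_{m\ell}(x_{nm})$ and the sum runs over $N$-subsets $\cL$ of $\{1,\ldots,L\}$. The crucial structural feature is that this representation factorizes over the coordinate directions $m$: only the single factor indexed by $k=M$ involves the variables $x_{1M},\ldots,x_{NM}$ that we want to integrate out.

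I would then compute that factor's integral. Writing, in the determinant case,
\[
|\det B_\cL^{(M)}|^2 = \sum_{\sigma,\tau \in \fS_N} \epsilon(\sigma)\epsilon(\tau) \prod_{n=1}^N \psi_{M,\ell_{\sigma(n)}}(x_{nM}) \bar \psi_{M,\ell_{\tau(n)}}(x_{nM}),
\]
after enumerating $\cL = \{\ell_1,\ldots,\ell_N\}$, orthonormality of the $\psi_{M\ell}$ in $L^2(\mu_M)$ collapses the integral over $\mu_M^{\otimes N}$ to those terms with $\sigma(n) = \tau(n)$ for all $n$, yielding $\int |\det B_\cL^{(M)}|^2 \, d\mu_M^{\otimes N} = N!$. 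The identical orthonormality calculation (without the sign factors) gives $\int |\per B_\cL^{(M)}|^2 \, d\mu_M^{\otimes N} = N!$, handling the case $M \notin \cK$. Substituting back yields
\[
\binom{L}{N}(N!)^{M-1} \hat p_N = \sum_\cL \prod_{k \in \hat\cK} |\det B_\cL^{(k)}|^2 \prod_{k \in \{1,\ldots,M-1\} \setminus \hat\cK} |\per B_\cL^{(k)}|^2.
\]

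To finish, I would apply Proposition~\ref{P:expansion} in reverse, this time to the $(M-1)$-fold hyperdeterminant built from $\hat K$: the right-hand side above is exactly $\Det_{\hat\cK}(\hat{\mathbb{B}})$, so $\hat p_N$ coincides with the density produced by the construction of Section~\ref{S:proof_of_main} from $\hat K$ and $\hat\cK$. The hypothesis that either $M \notin \cK$ or $\cK \setminus \{M\}$ is still non-empty is precisely what guarantees $\hat\cK \ne \emptyset$, which is exactly the condition needed to apply Proposition~\ref{P:expansion} on the reduced side. I do not anticipate a serious obstacle: the work is mostly bookkeeping, and the only subtlety worth flagging is that the two uses of Proposition~\ref{P:expansion}---one forward on $\cK$, one backward on $\hat\cK$---each require their respective index sets to be non-empty, and the stated hypothesis is calibrated exactly to ensure this.
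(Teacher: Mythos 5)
Your proposal is correct and follows essentially the same route as the paper: both start from the Cauchy--Binet-type expansion of Proposition~\ref{P:expansion}, integrate out the $M$-th coordinate block using orthonormality of the $\psi_{M\ell}$ to pick up a factor of $N!$ (the paper phrases this as counting the $N!$ pairs of bijections with $\sigma_M=\tau_M$, which is the same computation as your $\int|\det B_\cL^{(M)}|^2\,d\mu_M^{\otimes N}=\int|\per B_\cL^{(M)}|^2\,d\mu_M^{\otimes N}=N!$), and then recognize the result as the reduced construction for $\hat K$ and $\hat\cK$. Your observation that the hypothesis is calibrated exactly to keep $\hat\cK$ non-empty for the reverse application of Proposition~\ref{P:expansion} is also the correct accounting of where that assumption is used.
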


\begin{proof}
As in the proof of Theorem \ref{T:main},
\[
\begin{split}
& \binom{L}{N} (N!)^M \
p_N((x_{nm})_{1 \le n \le N, \, 1 \le m \le M}) \\
& \quad =
\sum_\cL
\sum_{\sigma_1} \cdots \sum_{\sigma_M}
\sum_{\tau_1} \cdots \sum_{\tau_M}
\prod_{k \in \cK} \epsilon(\sigma_k) \epsilon(\tau_k)
\prod_{m=1}^M
\prod_{n=1}^N \psi_{m \sigma_m(n)}(x_{nm}) \bar \psi_{m \tau_m(n)}(x_{nm}), \\
\end{split}
\]
where $\sigma_k$ and $\tau_k$ in the summations range over
bijective maps from $\{1, \ldots, N\}$ to $\cL$.

Observe that the integral
\[
\int_{\Sigma_M^N}
\left[
\prod_{n=1}^N 
\psi_{M \sigma_M(n)}(x_{nM}) \bar \psi_{M \tau_M(n)}(x_{nM})
\right]
\, \mu_M^{\otimes N}(d(x_{1M}, \ldots ,x_{NM}))
\]
is $1$ if and only if $\sigma_M(n) = \tau_M(n)$ for 
$1 \le n \le N$ (that is, if and only if 
$\sigma_M = \tau_M$), and the integral is $0$ otherwise.
For each choice of the set $\cL$, there are $N!$ choices
of the pair of bijections $(\sigma_M, \tau_M)$ such that
$\sigma_M = \tau_M$, and for all of these choices we
have, of course, that 
$\epsilon(\sigma_M) = \epsilon(\tau_M)$ if $M \in \cK$.

It follows that 
\[
\begin{split}
& \binom{L}{N} (N!)^M
\int_{\Sigma_M^N} 
p_N((x_{nm})_{1 \le n \le N, \, 1 \le m \le M})
\, \mu_M^{\otimes N}(d(x_{1M}, \ldots ,x_{NM})) \\
& \quad =
N!
\sum_\cL
\sum_{\sigma_1} \cdots \sum_{\sigma_{M-1}}
\sum_{\tau_1} \cdots \sum_{\tau_{M-1}}
\prod_{k \in \hat \cK} \epsilon(\sigma_k) \epsilon(\tau_k) \\
& \qquad \times \prod_{m=1}^{M-1}
\prod_{n=1}^N \psi_{m \sigma_m(n)}(x_{nm}) \bar \psi_{m \tau_m(n)}(x_{nm}), \\
\end{split}
\]
as claimed.
\end{proof}

\section{The number of points falling in a set}
\label{S:occupancy}

Write $(X_1, \ldots, X_L)$ for a $\Sigma^L$-valued
random variable that has the distribution possessing 
density $p_L$ with respect to the measure $\mu^{\otimes L}$.
Given a set $C \in \cA$, let $J_C := \#\{1 \le \ell \le L: X_\ell \in C\}$, so that $J_C$ is a random variable
taking values in the set $\{0,1,\ldots,L\}$.
The distribution of the random variable $J_C$ is determined by
the factorial moments $\bE[J_C(J_C-1) \cdots (J_C-N+1)]$,
$1 \le N \le L$.

If we write $I_\ell$ for the indicator random variable of the
event $\{X_\ell \in C\}$, then $J_C = I_1 + \cdots + I_L$.
By the exchangeability of $(X_1, \ldots, X_L)$, we have
\[
\begin{split}
& \bE[J_C(J_C-1) \cdots (J_C-N+1)] \\
& \quad =
L(L-1)\cdots(L-N+1) \, \bE[I_1 \cdots I_N] \\
& \quad =
L(L-1)\cdots(L-N+1) 
\int_{C^N} p_N(x_1, \ldots, x_N) \, \mu^{\otimes N}(d(x_1, \ldots, x_N)).\\
\end{split}
\]

Suppose now that $C = C_1 \times \cdots \times C_M$, with $C_m \subseteq \Sigma_m$,
$1 \le m \le M$.  Then
\[
\begin{split}
& \bE[J_C(J_C-1) \cdots (J_C-N+1)] \\
& \quad =
(N!)^{-M+1}
\sum_\cL
\sum_{\sigma_1} \cdots \sum_{\sigma_M}
\sum_{\tau_1} \cdots \sum_{\tau_M}
\prod_{k \in \cK} \epsilon(\sigma_k) \epsilon(\tau_k)
\prod_{m=1}^M
\prod_{n=1}^N H^{(m)}_{\sigma_m(n), \tau_m(n)}, \\
\end{split}
\]
where $H^{(m)}$ is the $L \times L$ matrix defined by
\[
H^{(m)}(\ell,\ell') 
:= 
\int_{C_m} \psi_{m \ell}(y) \bar \psi_{m \ell'}(y) \, \mu_m(dy), \quad
1 \le \ell, \ell' \le L.
\]
Thus, 
\[
\bE[J_C(J_C-1) \cdots (J_C-N+1)]
=
N! \sum_\cL
\left[\prod_{k \in \cK} \det(H_{\cL}^{(k)})\right]
\left[\prod_{k \notin \cK} \per(H_{\cL}^{(k)})\right],
\]
where $H_{\cL}^{(k)}$ is the $N \times N$ sub-matrix of
$H^{(k)}$ with rows and columns indexed by $\cL$.

\begin{Rem} When $M=1$ (so we are dealing with a determinantal
point process), the last expression is
just the trace of the $\binom{L}{N} \times \binom{L}{N}$
compound matrix consisting of the minors of $H^{(1)}$
with $N$ rows and columns.  This compound matrix has as
its eigenvalues all the products of the eigenvalues
of $H^{(1)}$ taken $N$ at a time, and so its trace
is the sum of all such products.  This shows that
$J_C$ is distributed as the sum of $L$ independent
Bernoulli random variables that have the eigenvalues
of $H^{(1)}$ as their respective success probabilities
-- a result that appears in \cite{MR1989442, MR2216966}.
We have been unable to find an analogous probabilistic
representation for $J_C$ for general $M$.
\end{Rem}

\def\cprime{$'$} \def\cprime{$'$}
\providecommand{\bysame}{\leavevmode\hbox to3em{\hrulefill}\thinspace}
\providecommand{\MR}{\relax\ifhmode\unskip\space\fi MR }
\providecommand{\MRhref}[2]{%
  \href{http://www.ams.org/mathscinet-getitem?mr=#1}{#2}
}
\providecommand{\href}[2]{#2}

\end{document}